\numberwithin{equation}{section}
\newtheorem{Theorem}{Theorem}[section]
\newtheorem{Lemma}{Lemma}[section]
\newtheorem{Assumption-Notation}[Theorem]{Assumption-Notation}
\newtheorem{Proposition}{Proposition}[section]
\newtheorem{Remark}{Remark}[section]
\newtheorem{Corollary}{Corollary}[section]
\long\def\symbolfootnote[#1]#2{\begingroup
\def\thefootnote{\fnsymbol{footnote}}\footnote[#1]{#2}\endgroup}
\journal{}
\begin{document}
\begin{frontmatter}
\title{A note on refracted L\'evy processes without positive jumps}
 \author{Jiang Zhou\corref{cor1}}
 \ead{1101110056@pku.edu.cn}
 \author{Lan Wu\corref{}}
 \ead{lwu@pku.edu.cn}
 \cortext[cor1]{Corresponding author.}

 %\address{\large Jiang Zhou   \\School of Mathematics Sciences\\Peking University\\Beijing 100871\\P.R.China}
%\email{\large 1101110056@pku.edu.cn }
%\address{\large Lan Wu
% \\School of Mathematics Sciences\\Peking University\\Beijing 100871\\P.R.China}
%\email{\large lwu@pku.edu.cn}
\address{School of Mathematical Sciences, Peking University, Beijing 100871, PR China}

\begin{abstract}{
For a refracted spectrally negative L\'evy process, we present some  new and fantastic formulas for its q-potential measures without killing. Unlike previous results, which are written in terms of the well-known q-scale functions, our formulas are free of the q-scale functions.
 }
\end{abstract}

\begin{keyword} Potential measures;  Refracted L\'evy processes; Scale functions.
\end{keyword}
\end{frontmatter}

\section{Introduction}
A refracted L\'evy process $U=(U_t)_{t\geq 0}$  has the following dynamics (see (1.1) in [4]):
\begin{equation}
U_t=X_t-\delta \int_{0}^{t}\textbf{1}_{\{U_s > b\}}ds,
\end{equation}
where $\delta, b \in \mathbb R$, $X=(X_t)_{t\geq 0}$ is a L\'evy process and $\textbf{1}_{A}$ is the indicator function of a set $A$. In [4-6], the authors have investigated the process $U$ under the assumption that $X$ is a L\'evy process with negative jumps only (i.e., $X$ is a spectrally negative  L\'evy process). In [7,8], we considered a similar process $U^*$, i.e., $U^*_t=X_t-\delta \int_{0}^{t}\textbf{1}_{\{U^*_s < b\}}ds$, where $X_t$ has jumps in both positive and negative directions.

In this paper, we are interested in the q-potential measures of $U$ without killing, i.e.,
\begin{equation}
\mathbb E_x\left[\int_{0}^{\infty}e^{-q t}\rm{\bf{1}}_{\{U_t \in dy\}}dt\right],
\end{equation}
where $q>0$ and $x,y \in \mathbb R$. If $X_t$ in (1.1) a spectrally negative L\'evy process, then formulas for (1.2), which are written in terms of q-scale functions, have been derived in [4]; see Lemma 2.2 below. However, the q-scale functions are only defined for a spectrally negative L\'evy process, which poses a serious constraint on extending the results in [4] to a L\'evy process that has both negative and positive jumps. In addition, the expressions in [4] are long and complicated.

In [9], under the assumption that $X$ in (1.1) is a jump diffusion process with jumps having rational Laplace transform,  we have used the ideas in [7,8] to derive formulas for (1.2), which are written in terms of Wiener-Hopf factors (see Lemma 2.3 below). Since any other L\'evy process can be approximated by a sequence of L\'evy processes with jumps having rational Laplace transform (see Remark 2.2 in [9] or Proposition $1$ in [1]), in [9], we have proposed a conjecture which claims that Corollary 4.1 in that paper holds for a general L\'evy process and the corresponding refracted L\'evy process $U$. If this conjecture is true, then Corollary 4.1 in [9] must also hold for a refracted L\'evy process $U$ driven by a spectrally negative L\'evy process. In other words, if $X$ in (1.1) is a spectrally negative L\'evy process, then Corollary 4.1 in [9] should reduce to Theorem 6 (iv) in [4] (i.e. Lemma 2.2 below). Confirming the above fact is the main objective of this article.

The remainder of this paper is organized as follows. In Section 2, we introduce some notations and give the main result. Then, in Sections 3 and 4, the details on the derivation of the main result are given. Finally, we conclude in Section 5.

\section{Notations and Main results}
In this article, the process $X=(X_t)_{t\geq 0}$ in (1.1) is a spectrally negative L\'evy process and the parameter $\delta > 0$. The law of $X$ starting from $x$ is denoted by $\mathbb P_x$; $\mathbb E_x$ denotes the corresponding expectation; and when $x = 0$, we write $\mathbb P$ and $\mathbb E$ to simplify the notation.
In the following, some notations are introduced, and for convenience, we use the notations  appeared in [4].

The Laplace exponent $\psi(\theta)$ of $X$ is given by
\begin{equation}
\begin{split}
\psi(\theta)=\ln (\mathbb E\left[e^{\theta X_1}\right])
&=\frac{1}{2}\sigma^2\theta^2+ \gamma \theta-\int_{(1,\infty)}(1-e^{-\theta x})\Pi(dx)\\
&-\int_{(0,1)}(1-e^{-\theta x}-\theta x)\Pi(dx), \ \ \theta \geq 0,
\end{split}
\end{equation}
where $\gamma \in \mathbb R$ and $\sigma \geq 0$; the L\'evy measure $\Pi$ has a support of $(0,\infty)$ and satisfies $\int_{(0,\infty)}(x^2\wedge 1)\Pi(dx)< \infty$.
Moreover, if $X$ has bounded variation, i.e., $\sigma=0$ and $\int_{(0,1)}x\Pi(dx)<\infty$, we can write $\psi(\theta)$ as
\begin{equation}
\psi(\theta)= d \theta -\int_{0}^{\infty}(1-e^{-\theta x})\Pi(dx),
\end{equation}
with  $d=\gamma + \int_{(0,1)} x\Pi(dx)$. As in [4], we require that $d > \delta > 0$ when the process $X$ has bounded variation.

For $q, \delta > 0$, define
\begin{equation}
\Phi(q)=\sup\{\theta \geq 0: \psi(\theta)=q\} \ \ and \ \ \varphi(q)=\sup\{\theta \geq 0: \psi(\theta)-\delta \theta =q\}.
\end{equation}
Obviously, we have $\varphi(q)>\Phi(q)$. For given $q>0$, the q-scale function $W^{(q)}(x)$ associated with the process $X$ is strictly increasing and continuous on $(0,\infty)$ with the Laplace transform
\begin{equation}
\int_{0}^{\infty}e^{-s x}W^{(q)}(x)dx=\frac{1}{\psi(s)-q}, \ \ for \ \ s >\Phi(q),
\end{equation}
and $W^{(q)}(x)=0$ for $x < 0$.
%The function $W^{(q)}(x)$ is important for the investigation of spectrally negative L\'evy processes, and one can refer to Chapter $8$ of [2] for example.

The following lemma gives some properties on the q-scale function $W^{(q)}(x)$, which is taken from Lemmas 2.3 and 3.1 in [2].

\begin{Lemma}
The function $W^{(q)}(x)$ is differentiable except at a countable number of points.
And if we define $W^{(q)}(0):=\lim_{x\downarrow 0}W^{(q)}(x)$, then
\begin{equation}
W^{(q)}(0)=\left\{\begin{array}{cc}
\frac{1}{d}, & if \ X \ has  \ bounded  \ variation,\\
0, & otherwise.
\end{array}\right.
\end{equation}
\end{Lemma}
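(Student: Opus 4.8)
\begin{demo}[Proof sketch]
I would establish the two assertions separately. For the value at $0$: from (2.1) the Gaussian term contributes $\tfrac12\sigma^{2}s$ to $\psi(s)/s$, the large--jump integral divided by $s$ is $O\!\big(s^{-1}\Pi((1,\infty))\big)\to0$, and since $(1+u)e^{-u}\le 1$ makes $s\mapsto s^{-1}(1-e^{-sx})$ nonincreasing, $x-s^{-1}(1-e^{-sx})\uparrow x$, so by monotone convergence $-s^{-1}\!\int_{(0,1)}(1-e^{-sx}-sx)\,\Pi(dx)\uparrow\int_{(0,1)}x\,\Pi(dx)$. Hence $\psi(s)/s\to L:=\gamma+\int_{(0,1)}x\,\Pi(dx)$, with $L=d\in(0,\infty)$ when $X$ has bounded variation and $L=\infty$ as soon as $\sigma>0$ or $\int_{(0,1)}x\,\Pi(dx)=\infty$. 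On the other hand $W^{(q)}$ is nonnegative and nondecreasing, so $W^{(q)}(0+)$ exists, and substituting $u=sx$ in (2.4) and using monotone convergence gives
\[
\frac{s}{\psi(s)-q}=s\int_{0}^{\infty}e^{-sx}W^{(q)}(x)\,dx=\int_{0}^{\infty}e^{-u}W^{(q)}(u/s)\,du\ \longrightarrow\ W^{(q)}(0+)\quad(s\to\infty).
\]
Since the left side tends to $1/L$, we obtain $W^{(q)}(0+)=1/L$, i.e.\ $1/d$ in the bounded--variation case and $0$ otherwise.

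For differentiability when $X$ has bounded variation, put $\overline\Pi(y):=\Pi((y,\infty))$ and use $\int_{0}^{\infty}(1-e^{-sx})\,\Pi(dx)=s\int_{0}^{\infty}e^{-sy}\overline\Pi(y)\,dy$ in (2.2), so that $\psi(s)-q=ds\,(1-\hat h(s))$ with $\hat h(s)=\mathcal L[h](s)$, $h(y):=d^{-1}\overline\Pi(y)+q/d$. As $\hat h(s)<1$ for large $s$, expanding the geometric series in (2.4) and inverting term by term yields
\[
W^{(q)}(x)=\frac1d+\frac1d\int_{0}^{x}g(y)\,dy,\qquad g=\sum_{n\ge1}h^{*n}\in L^{1}_{\mathrm{loc}},
\]
the series converging in $L^{1}_{\mathrm{loc}}$ by the usual renewal estimate ($\sum_{n}\hat h(s_{0})^{n}<\infty$ whenever $\hat h(s_{0})<1$). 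Now $h$ is nonincreasing, hence continuous off the at most countable set $A$ of atoms of $\Pi$, while for $n\ge2$ each $h^{*n}$ is the convolution of the locally integrable nonincreasing function $h$ with a locally integrable function and is therefore continuous on $(0,\infty)$ (induction on $n$). Thus $g$ is continuous off $A$, and at every $x_{0}\notin A$ the function $W^{(q)}$ is differentiable with $W^{(q)\prime}(x_{0})=g(x_{0})/d$; letting $x\downarrow0$ in the displayed formula also re-confirms $W^{(q)}(0+)=1/d$.

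When $X$ has unbounded variation I expect $W^{(q)}\in C^{1}(0,\infty)$, so there is no exceptional set at all. Via the Esscher transform, $W^{(q)}(x)=e^{\Phi(q)x}W_{\Phi(q)}(x)$, where $W_{\Phi(q)}$ is the $0$--scale function of the spectrally negative process with Laplace exponent $\psi_{\Phi(q)}(\lambda):=\psi(\lambda+\Phi(q))-q$; this process has the same Gaussian coefficient $\sigma$ and L\'evy measure $e^{-\Phi(q)x}\,\Pi(dx)$ — hence is again of unbounded variation — and drifts to $+\infty$, and since $e^{\Phi(q)x}$ is smooth it suffices to prove $W_{\Phi(q)}\in C^{1}(0,\infty)$. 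For a spectrally negative process drifting to $+\infty$, $W_{\Phi(q)}$ equals, up to a constant, the renewal function of the descending ladder--height subordinator $\widehat{H}$, whose Laplace exponent is $\lambda\mapsto\psi_{\Phi(q)}(\lambda)/\lambda$; the unbounded variation of $X$ forces $\widehat{H}$ to have strictly positive drift ($\sigma>0$) or infinite L\'evy measure ($\sigma=0$), and in either case the renewal measure of $\widehat{H}$ is absolutely continuous with a continuous density, which is $W_{\Phi(q)}'$. Together with the previous paragraph this gives differentiability of $W^{(q)}$ on $(0,\infty)$ away from the (countable, possibly empty) set of atoms of $\Pi$. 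The Laplace--transform bookkeeping and the convergence estimates above are routine; the one genuinely nontrivial point is the last one — continuity of the renewal density of $\widehat{H}$ in the unbounded--variation case, equivalently the $C^{1}$--regularity of scale functions of unbounded--variation spectrally negative L\'evy processes — which rests on the fine theory of potential measures of subordinators and is exactly what I would import from Lemmas~2.3 and~3.1 of [2].
\end{demo}
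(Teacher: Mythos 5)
First, note that the paper does not actually prove this lemma: it is imported verbatim from Lemmas 2.3 and 3.1 of [2], so there is no in-paper argument to compare yours against. What you have written is essentially a reconstruction of the standard proofs from [2] (and Chapter 8 of [3]). Your treatment of $W^{(q)}(0+)$ is correct and complete: showing $\psi(s)/s\to d$ in the bounded-variation case and $\to\infty$ otherwise, then substituting $u=sx$ in (2.4) and using monotone convergence, is exactly the usual initial-value-theorem argument. The renewal-series representation $W^{(q)}(x)=\tfrac1d\bigl(1+\int_0^x\sum_{n\ge1}h^{*n}(y)\,dy\bigr)$ in the bounded-variation case is also the right device and correctly locates the exceptional set at the atoms of $\Pi$.

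Two steps are overstated, however. (a) The claim that the convolution of a locally integrable nonincreasing function with a locally integrable function is continuous is false in general; what saves the argument is that \emph{both} factors in $h^{*2}$ are nonincreasing (so the boundary contribution is controlled by $h(x-\varepsilon)\int_0^\varepsilon h$), and that for $n\ge3$ the factor $h^{*(n-1)}$ is already continuous, hence locally bounded away from the origin. Relatedly, continuity of $g=\sum_n h^{*n}$ off the atom set $A$ requires locally uniform convergence of the tail of the series, not merely $L^1_{\mathrm{loc}}$ convergence; this is obtainable from $\int_0^\infty e^{-s_0x}h^{*n}(x)\,dx=\hat h(s_0)^n$ but needs to be said. (b) More seriously, in the unbounded-variation case with $\sigma=0$ your assertion that a driftless subordinator with infinite L\'evy measure has an absolutely continuous renewal measure with a continuous density is not a general fact about subordinators (infinite L\'evy mass does not even guarantee absolute continuity of the renewal measure); the continuity of $W_{\Phi(q)}'$ there is precisely the delicate content of Lemma 2.4 in [2], which you correctly flag as the one nontrivial input but then import rather than prove. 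So your sketch genuinely establishes the bounded-variation half of the lemma, while the unbounded-variation half still rests on the same citation the paper itself uses for the whole statement.
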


\begin{Remark}
Since $W^{(q)}(x)$ is strictly increasing and continuous on $(0,\infty)$ and Lemma 2.1 holds,  the
integral $\int_{0}^{b}e^{-s x}W^{(q)}(dx)$ for $b > 0$ can be understood as
$\int_{0}^{b}e^{-s x}W^{(q)\prime}(x)dx$.
\end{Remark}
Due to the above remark, the application of integration by parts leads to
\begin{equation}
\begin{split}
&W^{(q)}(0)+\int_{0}^{\infty}e^{-s x}W^{(q)\prime}(x)dx=\int_{[0,\infty)}e^{-s x}W^{(q)}(dx) \\
&=\int_{0}^{\infty}s e^{-s x}W^{(q)}(x)dx=\frac{s}{\psi(s)-q}, \ \ s>\Phi(q).
\end{split}
\end{equation}
where the final equality is due to (2.4).

Similar to [4], for given $\delta>0$, we introduce the process $Y=(Y_t)_{t\geq 0}$, which is defined as $Y_t=X_t-\delta t$. In this paper, we denote by $\hat{\mathbb P}_y$ the law of $Y$ starting from $y$ and by $\hat{\mathbb E}_y$ the corresponding expectation; and when $y=0$, we write briefly $\hat{\mathbb P}$ and $\hat{\mathbb E}$.  Note that the Laplace exponent of $Y$ is given by $\psi(\theta) -\delta \theta$, i.e.,
$\psi(\theta) -\delta \theta= \ln(\hat{\mathbb E}\left[e^{\theta Y_1}\right])$. For $q>0$, the q-scale function associated with the process $Y$ is denoted by $\mathbb W^{(q)}(x)$ and satisfies
\begin{equation}
\int_{0}^{\infty}e^{-s x}\mathbb W^{(q)}(x)dx=\frac{1}{\psi(s)-\delta s -q}, \ \  for \ \ s> \varphi(q),
\end{equation}
where $\varphi(q)$ is given by (2.3).

The expression of (1.2) has  been derived in [4] and is given in Lemma 2.2.
\begin{Lemma}\{Theorem 6 (iv) in [4]\}

(i) For $x,b \in \mathbb R$, $q>0$ and $y  \geq b$,
\begin{equation}
\begin{split}
&\frac{\mathbb P_x\left(U_{e(q)} \in dy\right)}{dy}=q\frac{\mathbb E_x\left[\int_{0}^{\infty}e^{-q t}\rm{\bf{1}}_{\{U_t \in dy\}}dt\right]}{dy}=q\frac{\varphi(q)-\Phi(q)}{\delta \Phi(q)}e^{-\varphi(q)(y-b)}\\
&\times \left(\delta \Phi(q)e^{-\Phi(q)b}\int_{b}^{x}e^{\Phi(q)z}\mathbb W^{(q)}(x-z)dz+e^{\Phi(q)(x-b)}\right)-q\mathbb W^{(q)}(x-y).
\end{split}
\end{equation}

(ii) For $x,b \in \mathbb R$, $q>0$ and $y < b$,
\begin{equation}
\begin{split}
&q\frac{\mathbb E_x\left[\int_{0}^{\infty}e^{-q t}\rm{\bf{1}}_{\{U_t \in dy\}}dt\right]}{dy}=-\delta q\int_{b}^{x}\mathbb W^{(q)}(x-z)W^{(q)\prime}(z-y)dz\\
&-qW^{(q)}(x-y)+q \frac{\varphi(q)-\Phi(q)}{\Phi(q)}
e^{\varphi(q)b}\int_{b}^{\infty}e^{-\varphi(q)z}W^{(q)\prime}(z-y)dz\\
&\times \left(e^{\Phi(q)(x-b)}+\delta \Phi(q)e^{-\Phi(q)b}\int_{b}^{x}e^{\Phi(q)z}\mathbb W^{(q)}(x-z)dz\right).
\end{split}
\end{equation}
\end{Lemma}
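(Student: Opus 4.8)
\medskip
\noindent\textbf{Proof strategy.}
By spatial homogeneity it suffices to treat $b=0$; write $R^{(q)}(x,dy)=\mathbb E_x[\int_0^{\infty}e^{-qt}\mathbf{1}_{\{U_t\in dy\}}\,dt]$. The plan is to produce two renewal-type identities for $R^{(q)}$ by applying the strong Markov property to $U$ at its first passage across the refraction level $0$ in each of the two directions, and then to close the resulting system. Since $U$ has no positive jumps (the refraction term in $(1.1)$ is continuous), $U$ creeps upward across $0$, so with $\tau^{+}=\inf\{t:U_t>0\}$ (before which $U$ coincides with $X$) one gets, for $x\le 0$,
\[
R^{(q)}(x,dy)=\big(e^{\Phi(q)x}W^{(q)}(-y)-W^{(q)}(x-y)\big)\,dy+e^{\Phi(q)x}\,R^{(q)}(0,dy),
\]
the first term being the classical scale-function $q$-resolvent of $X$ killed on exiting $(-\infty,0)$ upward. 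On $(0,\infty)$ the process $U$ agrees with $Y$, which overshoots $0$ downward, so with $\tau^{-}=\inf\{t:U_t<0\}$ (so that $U_{\tau^{-}}=Y_{\tau^{-}}\le0$) one gets, for $x\ge0$,
\[
R^{(q)}(x,dy)=\big(e^{-\varphi(q)y}\mathbb W^{(q)}(x)-\mathbb W^{(q)}(x-y)\big)\,dy+\hat{\mathbb E}_x\big[e^{-q\tau^{-}}R^{(q)}(Y_{\tau^{-}},dy)\big];
\]
inserting the first identity at the (negative) restart point $Y_{\tau^{-}}$ converts the last term into $A(x)R^{(q)}(0,dy)$ plus, when $y<0$, a contribution governed by $\hat{\mathbb E}_x[e^{-q\tau^{-}}W^{(q)}(Y_{\tau^{-}}-y)]$, where $A(x):=\hat{\mathbb E}_x[e^{-q\tau^{-}}e^{\Phi(q)Y_{\tau^{-}}}]$.

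The two overshoot functionals of the spectrally negative process $Y$ are then evaluated by a Dynkin/It\^o argument against explicit generator images, where $\mathcal L_Y$ denotes the generator of $Y$. Applying $(\mathcal L_Y-q)$ to $x\mapsto e^{\Phi(q)x}$ gives $-\delta\Phi(q)e^{\Phi(q)x}$ (this is where $\psi(\Phi(q))=q$ enters), and combining with the killed $Y$-resolvent above yields
\[
A(x)=e^{\Phi(q)x}-\frac{\delta\Phi(q)}{\varphi(q)-\Phi(q)}\,\mathbb W^{(q)}(x)+\delta\Phi(q)\int_0^{x}e^{\Phi(q)z}\mathbb W^{(q)}(x-z)\,dz;
\]
similarly, using that $z\mapsto W^{(q)}(z-y)$ is $q$-invariant for $X$ on $(y,\infty)$, so that $(\mathcal L_Y-q)$ applied to it equals $-\delta W^{(q)\prime}(z-y)$ there, one obtains for $y<0$
\[
\hat{\mathbb E}_x\big[e^{-q\tau^{-}}W^{(q)}(Y_{\tau^{-}}-y)\big]=W^{(q)}(x-y)+\delta\int_0^{x}\mathbb W^{(q)}(x-z)W^{(q)\prime}(z-y)\,dz-\delta\mathbb W^{(q)}(x)\int_0^{\infty}e^{-\varphi(q)z}W^{(q)\prime}(z-y)\,dz.
\]
Finally the resolvent at the refraction level is pinned down by matching the two representations at $0$ (equivalently by letting $x\downarrow0$ in the $x\ge0$ one): since $1-A(0)=\frac{\delta\Phi(q)}{\varphi(q)-\Phi(q)}\mathbb W^{(q)}(0)$, this gives $R^{(q)}(0,dy)=\frac{\varphi(q)-\Phi(q)}{\delta\Phi(q)}e^{-\varphi(q)y}\,dy$ for $y\ge0$ and $R^{(q)}(0,dy)=\big(\frac{\varphi(q)-\Phi(q)}{\Phi(q)}\int_0^{\infty}e^{-\varphi(q)z}W^{(q)\prime}(z-y)\,dz-W^{(q)}(-y)\big)\,dy$ for $y<0$. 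Substituting everything back into the two displays of the previous paragraph, regrouping, and restoring the dependence on $b$ (replace $x,y$ by $x-b,y-b$) reproduces (2.8) and (2.9); note that the term $\frac{\delta\Phi(q)}{\varphi(q)-\Phi(q)}\mathbb W^{(q)}(x)$ hidden inside $A(x)$ is exactly what cancels the $e^{-\varphi(q)y}\mathbb W^{(q)}(x)$ coming from the killed $Y$-resolvent, which is why (2.8) has its clean form and no stray products of scale functions survive.

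The step I expect to be the main obstacle is the determination of $R^{(q)}(0,dy)$ itself: plugging $x=0$ directly into the $x\ge0$ identity is circular, so one must argue through a genuine limit $x\downarrow0$ (or localise $U$ at $0$ via excursion theory), and this is precisely where the bounded/unbounded variation dichotomy of Lemma 2.1 and the value $W^{(q)}(0)$ come in — when $X$ has unbounded variation one has $\mathbb W^{(q)}(0)=0$, the relation $1-A(0)=\frac{\delta\Phi(q)}{\varphi(q)-\Phi(q)}\mathbb W^{(q)}(0)$ degenerates to $0=0$, and the quotient defining $R^{(q)}(0,dy)$ has to be recovered as $\lim_{x\downarrow0}\mathbb W^{(q)}(x)/(1-A(x))$. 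A secondary point is the rigour of the Dynkin computations above, since $W^{(q)}$ is only piecewise $C^1$ and $W^{(q)}(0)$ need not vanish; this is handled exactly as in Remark 2.1 and (2.7). An alternative route that sidesteps the level-$0$ analysis is to start instead from the $q$-resolvents of $U$ killed on first passage above a high level $c$ and let $c\to\infty$, using $\mathbb E_x[e^{-q\kappa_c^{+}}]\to0$ together with $W^{(q)}(c-u)/W^{(q)}(c)\to e^{-\Phi(q)u}$ and $\mathbb W^{(q)}(c-u)/\mathbb W^{(q)}(c)\to e^{-\varphi(q)u}$ as $c\to\infty$; this produces the same closed forms.
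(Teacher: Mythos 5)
This lemma is not proved in the paper at all: it is quoted verbatim as Theorem 6~(iv) of the reference [4] (Kyprianou--Loeffen), and the paper's own contribution (Theorem 2.1) is to show that a \emph{different} formula, Lemma 2.3, reduces to it. So there is no in-paper proof to compare against; what you have written is a reconstruction of the argument in the cited source, and it is essentially the right one. Your two first-passage decompositions (over the refraction level upward by creeping, downward by overshoot), the killed resolvents $\bigl(e^{\Phi(q)x}W^{(q)}(-y)-W^{(q)}(x-y)\bigr)dy$ and $\bigl(e^{-\varphi(q)y}\mathbb W^{(q)}(x)-\mathbb W^{(q)}(x-y)\bigr)dy$, and the two Gerber--Shiu functionals $A(x)$ and $\hat{\mathbb E}_x[e^{-q\tau^-}W^{(q)}(Y_{\tau^-}-y)]$ all check out against (2.8)--(2.9) (I verified the cancellation of $e^{-\varphi(q)y}\mathbb W^{(q)}(x)$ and the values of $R^{(q)}(0,dy)$ in both regimes of $y$), and the closure of the system via $1-A(0)=\frac{\delta\Phi(q)}{\varphi(q)-\Phi(q)}\mathbb W^{(q)}(0)$ is correct in the bounded variation case. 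The two soft spots are exactly the ones you flag yourself: (a) when $X$ has unbounded variation, $\mathbb W^{(q)}(0)=0$ and the determination of $R^{(q)}(0,dy)$ genuinely requires the limiting two-sided-exit argument (kill on exiting $[a,c]$, let $a\to-\infty$, $c\to\infty$), which is in fact how [4] proceeds; and (b) the Dynkin computations use $(\mathcal L_X-q)W^{(q)}(\cdot-y)=0$ and $(\mathcal L_Y-q)e^{\Phi(q)\cdot}=-\delta\Phi(q)e^{\Phi(q)\cdot}$ for functions that are only piecewise $C^1$, so they must be justified via the known identities for $\hat{\mathbb E}_x[e^{-q\tau^-}g(Y_{\tau^-})]$ rather than a naive It\^o argument. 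As a sketch the proposal is sound; as a complete proof it would need those two points filled in, for which your ``alternative route'' at the end is the standard remedy.
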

\begin{Remark}
Since $W^{(q)}(x)=\mathbb W^{(q)}(x)=0$ for $x<0$, thus
\[
\int_{b}^{x}e^{\Phi(q)z}\mathbb W^{(q)}(x-z)dz=\int_{b}^{x}\mathbb W^{(q)}(x-z)W^{(q)\prime}(z-y)dz=0, \ \ if \ \ x<b,
\]
and $W^{(q)}(x-y)=\mathbb W^{(q)}(x-y)=0$ if $x<y$.
\end{Remark}

\begin{Remark}
Formulas (2.8) and (2.9) depend  explicitly on the q-scale functions $W^{(q)}(x)$ and $\mathbb W^{(q)}(x)$. Since scale functions are associated to a spectrally negative L\'evy process, not a general L\'evy process, these results are limited.
\end{Remark}

For $T \geq 0$, define $\underline{X}_{T}:=\inf_{0\leq t \leq T}X_t$ and  $\overline{X}_{T}:=\sup_{0\leq t \leq T}X_t$. And for the process $Y$, we have similar definitions. In addition, throughout the rest of this paper, for given $q > 0$, $e(q)$ is an exponential random variable with rate $q$ and independent of all other stochastic processes.

Based on our previous research in [9], we present another expression for (1.2) in the following Lemma 2.3, which holds at least for a refracted L\'evy process $U$ driven by a jump diffusion process with jumps having rational Laplace transform (see Corollary 4.1 in [9]).
\begin{Lemma}
(i) For $q>0$ and $y \geq b$,
\begin{equation}
\begin{split}
&\mathbb P_x\left(U_{e(q)} \in dy\right)=q \mathbb E_x\left[\int_{0}^{\infty}e^{-qt}\rm{\bf{1}}_{\{U_t \in dy\}}dt\right]\\
&=\big(F_1(0)+1\big)K_q(dy-x)+ \int_{b-x}^{y-x}F_1(dy-x-z)K_q(dz),
\end{split}
\end{equation}
where $K_q(x)$ is the convolution of the probability distribution functions of $\underline{Y}_{e(q)}$ under $\hat{\mathbb P}$ and $\overline{X}_{e(q)}$ under $\mathbb P$, i.e.,
\begin{equation}
K_q(x)=\int_{(-\infty,\min\{0,x\}]}\mathbb P\left(\overline{X}_{e(q)} \leq x-z\right)\hat{\mathbb P}\left(\underline{Y}_{e(q)} \in dz \right),
\end{equation}
and $F_1(x)$ is a continuous function on $[0, \infty)$ with the Laplace transform:
\begin{equation}
\begin{split}
&\int_{0}^{\infty} e^{-s x}F_1(x)dx
=\frac{1}{s}\left(\frac{\hat{\mathbb E}\left[e^{-s \overline{Y}_{e(q)}}\right]}
{\mathbb E\left[e^{-s \overline{X}_{e(q)}}\right]}-1\right),\ \  s > 0.
\end{split}
\end{equation}

(ii) For $q>0$ and $y < b$,
\begin{equation}
\mathbb P_x\left(U_{e(q)} \in dy\right)=\big(F_2(0)+1\big)K_q(dy-x)- \int_{y-x}^{b-x}F_2(dy-x-z)K_q(dz),
\end{equation}
where $F_2(x)$ is a continuous function on $(-\infty, 0]$ with the Laplace transform:
\begin{equation}
\begin{split}
&\int_{-\infty}^{0} e^{sz}F_2(z)dz
=\frac{1}{s}\left(\frac{\mathbb E\left[e^{s \underline{X}_{e(q)}}\right]}
{\hat{\mathbb E}\left[e^{s \underline{Y}_{e(q)}}\right]}-1\right), \ \ s > 0.
\end{split}
\end{equation}
\end{Lemma}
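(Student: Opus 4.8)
Formulas (2.10)--(2.14) are already established when the driver is a jump-diffusion whose jumps have a rational Laplace transform (Corollary 4.1 of [9]), but for a genuinely spectrally negative driver --- the standing assumption here --- they still require proof. Since Lemma 2.2 (Theorem 6 (iv) of [4]) already gives the $q$-potential measure in scale-function form, the plan is to prove Lemma 2.3 by verifying that, when $X$ is spectrally negative, the right-hand sides of (2.10) and (2.13) coincide with those of (2.8) and (2.9); Lemma 2.2 then identifies them with $q\,\mathbb{E}_x[\int_0^\infty e^{-qt}\mathbf{1}_{\{U_t\in dy\}}\,dt]$, and the comparison announced in the introduction is obtained at the same stroke. (Alternatively one could approximate a general spectrally negative $X$ by spectrally negative jump-diffusions whose jumps have a rational Laplace transform --- possible by the density statement recalled in the introduction --- and pass to the limit in Corollary 4.1 of [9], but that demands controlling the convergence of the Wiener--Hopf factors and of $K_q,F_1,F_2$, whereas the route via Lemma 2.2 is shorter and yields the comparison for free.)

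\emph{Step 1: make the Wiener--Hopf ingredients explicit.} Since $X$ is spectrally negative, $\overline{X}_{e(q)}$ is exponentially distributed with parameter $\Phi(q)$, and since $Y_t=X_t-\delta t$ is spectrally negative with $\varphi(q)$ playing for $Y$ the role that $\Phi(q)$ plays for $X$, $\overline{Y}_{e(q)}$ is exponentially distributed with parameter $\varphi(q)$. Hence $\mathbb{E}[e^{-s\overline{X}_{e(q)}}]=\Phi(q)/(\Phi(q)+s)$ and $\hat{\mathbb{E}}[e^{-s\overline{Y}_{e(q)}}]=\varphi(q)/(\varphi(q)+s)$, so (2.12) gives $\int_0^\infty e^{-sx}F_1(x)\,dx=\frac{\varphi(q)-\Phi(q)}{\Phi(q)(\varphi(q)+s)}$, that is $F_1(x)=\frac{\varphi(q)-\Phi(q)}{\Phi(q)}e^{-\varphi(q)x}$ on $[0,\infty)$ and $F_1(0)+1=\varphi(q)/\Phi(q)$. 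For the downward factors one uses the classical scale-function laws $\mathbb{P}(-\underline{X}_{e(q)}\in dz)=\tfrac{q}{\Phi(q)}W^{(q)}(dz)-qW^{(q)}(z)\,dz$ and $\hat{\mathbb{P}}(-\underline{Y}_{e(q)}\in dz)=\tfrac{q}{\varphi(q)}\mathbb{W}^{(q)}(dz)-q\mathbb{W}^{(q)}(z)\,dz$ (both readily checked against (2.4), (2.6) and (2.7)), which give $\mathbb{E}[e^{s\underline{X}_{e(q)}}]=\frac{q(\Phi(q)-s)}{\Phi(q)(q-\psi(s))}$ and $\hat{\mathbb{E}}[e^{s\underline{Y}_{e(q)}}]=\frac{q(\varphi(q)-s)}{\varphi(q)(q-\psi(s)+\delta s)}$; feeding these into (2.14) and doing partial fractions writes the Laplace transform of $F_2$ as a combination of terms of the types $\frac{1}{\varphi(q)-s}$, $\frac{1}{\psi(s)-q}$ and $\frac{1}{(\varphi(q)-s)(\psi(s)-q)}$, so that $F_2$ is a combination of an exponential, the scale function $W^{(q)}$, and a convolution of $W^{(q)}$ with an exponential --- the non-elementary piece responsible for the $W^{(q)\prime}$-integrals appearing in (2.9).

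\emph{Step 2: assemble $K_q$ and substitute.} By (2.11), $K_q$ is the distribution function of $\overline{X}_{e(q)}+\underline{Y}_{e(q)}$ with independent summands; convolving the $\mathrm{Exp}(\Phi(q))$ law with the scale-function law of $-\underline{Y}_{e(q)}$ computed in Step 1 gives $K_q$, hence also $K_q(dy-x)$ and the integrand $K_q(dz)$ on the relevant ranges, in closed form as a mixture of an $e^{-\Phi(q)\cdot}$-term and $\mathbb{W}^{(q)}$-terms --- precisely the building blocks one sees in (2.8)--(2.9). Substituting $F_1$ and this $K_q$ into (2.10) for $y\geq b$, and $F_2$ and $K_q$ into (2.13) for $y<b$, turns Lemma 2.3 into an identity between two explicit functions; since both sides depend only on the differences $x-b$ and $x-y$, this is a comparison of two functions of two real variables.

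\emph{Step 3 and the main obstacle.} The case $y\geq b$ is the lighter of the two: $F_1$ is a single exponential, so the convolution in (2.10) produces the $e^{-\varphi(q)(y-b)}$-term carrying $\bigl(\delta\Phi(q)e^{-\Phi(q)b}\int_b^x e^{\Phi(q)z}\mathbb{W}^{(q)}(x-z)\,dz+e^{\Phi(q)(x-b)}\bigr)$, while $(F_1(0)+1)K_q(dy-x)$ contributes the rest of the $e^{\Phi(q)(x-b)}$-piece and the correction $-q\mathbb{W}^{(q)}(x-y)$, reproducing (2.8). The real work is the case $y<b$: because $F_2$ is not a pure exponential, the integral against $K_q$ in (2.13) unfolds into a double integral that must be rearranged --- by Fubini, by the associativity of convolution, and by the Laplace identities (2.4), (2.6), (2.7) used to collapse the $W^{(q)}\ast\mathbb{W}^{(q)}$-type integrals --- into the three summands of (2.9), in particular the term $\int_b^x\mathbb{W}^{(q)}(x-z)W^{(q)\prime}(z-y)\,dz$ and the term $e^{\varphi(q)b}\int_b^\infty e^{-\varphi(q)z}W^{(q)\prime}(z-y)\,dz$. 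Keeping track of the three boundary contributions --- the constant $F_2(0)+1$, the two distinct lower limits $y-x$ and $b-x$, and the degenerate cases $x<b$ and $x<y$ flagged in Remark 2.2 --- without sign errors is the delicate part; once every integral has been put into the right shape, the Laplace characterisations (2.12) and (2.14) make the final identification essentially mechanical.
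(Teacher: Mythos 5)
Your proposal follows essentially the same route as the paper: the explicit Wiener--Hopf identifications of $F_1$, $F_2$ and $K_q$ in your Steps 1--2 are exactly Proposition 2.1 (via (2.20), (2.21), (2.26) and (2.29)), and your Step 3 is the case-by-case verification that (2.10) and (2.13) collapse to (2.8) and (2.9), which is precisely the content of Sections 3 and 4, with Lemma 2.2 then supplying the identification with the $q$-potential measure. The only difference is one of completeness rather than of method: the Fubini rearrangements and the collapsing of the $W^{(q)}\ast\mathbb{W}^{(q)}$ convolutions for $y<b$ that you describe qualitatively are carried out explicitly in (4.2)--(4.12) of the paper.
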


\begin{Remark}
The advantage of the formulas in Lemma 2.3 is that they are free of the two functions $W^{(q)}(x)$ and $\mathbb W^{(q)}(x)$.
\end{Remark}

In this paper, we want to establish the following Theorem 2.1.
\begin{Theorem}
For the case that $X$ in (1.1) is a spectrally negative L\'evy process, Lemma 2.3 also holds, i.e., Lemma 2.3 will reduce to Lemma 2.2.
\end{Theorem}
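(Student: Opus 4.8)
The plan is to evaluate the right--hand sides of the formulas of Lemma 2.3 under the standing assumption that $X$ (and hence also $Y=X-\delta t$) is spectrally negative, and to check that they collapse to the right--hand sides of Lemma 2.2. The first step is to simplify the Wiener--Hopf ingredients. Since $X$ and $Y$ have no positive jumps, $\overline{X}_{e(q)}$ and $\overline{Y}_{e(q)}$ are exponentially distributed with parameters $\Phi(q)$ and $\varphi(q)$, so $\mathbb{E}[e^{-s\overline{X}_{e(q)}}]=\Phi(q)/(\Phi(q)+s)$ and $\hat{\mathbb{E}}[e^{-s\overline{Y}_{e(q)}}]=\varphi(q)/(\varphi(q)+s)$. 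Substituting these into (2.12) and inverting the Laplace transform gives the explicit form $F_1(x)=\frac{\varphi(q)-\Phi(q)}{\Phi(q)}e^{-\varphi(q)x}$ for $x\ge 0$; in particular $F_1(0)+1=\varphi(q)/\Phi(q)$ and $F_1$ is absolutely continuous, so the $F_1$--integrals in (2.10) become elementary.

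For $F_2$ one uses in addition the Wiener--Hopf factorisations $\mathbb{E}[e^{s\underline{X}_{e(q)}}]=\frac{q(\Phi(q)-s)}{\Phi(q)(q-\psi(s))}$ and $\hat{\mathbb{E}}[e^{s\underline{Y}_{e(q)}}]=\frac{q(\varphi(q)-s)}{\varphi(q)(q-\psi(s)+\delta s)}$: substituting into (2.14), simplifying by means of $\psi(\Phi(q))=q$ and $\psi(\varphi(q))-\delta\varphi(q)=q$, and inverting with the help of (2.4) represents $F_2$ explicitly as a linear combination of $e^{\varphi(q)\,\cdot}$, $W^{(q)}(\cdot)$ and $\int_0^{\cdot}e^{\varphi(q)(\cdot-v)}W^{(q)}(v)\,dv$ --- precisely the building blocks appearing in (2.9). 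One also checks at this stage that, thanks to $\psi(\Phi(q))=q$ and $\psi(\varphi(q))-\delta\varphi(q)=q$, the exponentially growing parts of these three terms cancel, so $F_2$ really is the function with convergent transform posited in Lemma 2.3.

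The second step is to identify $K_q$. For the spectrally negative process $Y$, the law of $\underline{Y}_{e(q)}$ under $\hat{\mathbb{P}}$ is expressible through the scale function $\mathbb{W}^{(q)}$, namely $\hat{\mathbb{P}}(\underline{Y}_{e(q)}\ge -a)=\frac{q}{\varphi(q)}\mathbb{W}^{(q)}(a)-q\int_0^a\mathbb{W}^{(q)}(u)\,du$ for $a\ge 0$; this law has an atom of mass $\frac{q}{\varphi(q)}\mathbb{W}^{(q)}(0)$ at $0$, nonzero exactly in the bounded--variation case, and it is through this atom --- together with the $W^{(q)}$--term of $F_2$ --- that the constants $\mathbb{W}^{(q)}(0)=1/(d-\delta)$ and $W^{(q)}(0)=1/d$ of Lemma 2.1 enter. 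Convolving this law with the $\mathrm{Exp}(\Phi(q))$--law of $\overline{X}_{e(q)}$ as prescribed by (2.11) shows that $K_q$ is absolutely continuous, with density expressible through $\mathbb{W}^{(q)}$ and a factor $e^{-\Phi(q)\,\cdot}$; in particular all the ``$d$''--measures in (2.10) and (2.13) reduce to ordinary integrals, and convolving $K_q$ with the exponential factors coming from $F_1$ or $F_2$ produces the iterated integrals $\int_b^x e^{\Phi(q)z}\mathbb{W}^{(q)}(x-z)\,dz$ of (2.8)--(2.9).

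It then remains to insert the explicit $F_1$, $F_2$ and density of $K_q$ into (2.10) and (2.13) and to reorganise, via Fubini and integration by parts, into (2.8) and (2.9): the two summands of (2.8) and the three summands of (2.9) arise from the convolution structure together with the relations $\psi(\Phi(q))=q$, $\psi(\varphi(q))-\delta\varphi(q)=q$ and the scale--function transforms (2.4) and (2.6). An economical way to carry this out is to take Laplace transforms in $x$ over $[b,\infty)$: since the scale--function terms vanish for $x<b$ (Remark 2.3), the convolutions become products and the claimed identity reduces to an algebraic identity among rational expressions in the transform variable. \textbf{This last reconciliation is the main obstacle}, for two reasons: (i) one must keep careful track of the boundary terms $F_i(0)$, $W^{(q)}(0)$, $\mathbb{W}^{(q)}(0)$ and of the regions where $W^{(q)}$ and $\mathbb{W}^{(q)}$ vanish, so that the bounded-- and unbounded--variation cases are handled simultaneously; and (ii) the transform identities (2.4), (2.6), (2.7) are valid only for $s>\Phi(q)$, resp. $s>\varphi(q)$, whereas the transforms in (2.12), (2.14) are stated for all $s>0$, so that convergent combinations such as $\frac{q}{\varphi(q)}\mathbb{W}^{(q)}(\cdot)-q\int_0^{\cdot}\mathbb{W}^{(q)}(u)\,du$ must never be split into individually divergent pieces. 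The non-obvious algebraic cancellations that make the final match work are exactly those forced by $\psi(\Phi(q))=q$ and $\psi(\varphi(q))-\delta\varphi(q)=q$.
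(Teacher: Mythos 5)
Your proposal is correct and follows essentially the same route as the paper: it first makes $F_1$, $F_2$ and $K_q$ explicit via the one-sided Wiener--Hopf factors and the scale-function representations of the laws of $\underline{X}_{e(q)}$ and $\underline{Y}_{e(q)}$ (this is exactly Proposition 2.1 and Corollary 2.1 in the paper), and then reduces (2.10) and (2.13) to (2.8) and (2.9) by convolution identities obtained from Laplace transforms, Fubini and integration by parts, with the cancellations driven by $\psi(\Phi(q))=q$ and $\psi(\varphi(q))-\delta\varphi(q)=q$. The only difference is organizational: the paper carries out the final reconciliation case by case in the spatial variable (splitting according to the positions of $x$, $y$ and $b$) rather than by a single Laplace transform in $x$, but the substance is the same.
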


Before starting the derivation of the above result in next two sections, expressions for $F_1(x)$, $F_2(x)$ and $K_q(x)$ are given in the following proposition.

\begin{Proposition}
(i) For $x\geq 0$,
\begin{equation}
F_1(x)=\frac{\varphi(q)-\Phi(q)}{\Phi(q)}e^{-\varphi(q)x}.
\end{equation}

(ii) For $x\leq 0$,
\begin{equation}
\begin{split}
&F_2(x)=\frac{\varphi(q)-\Phi(q)}{\Phi(q)}e^{-\varphi(q)x}-\frac{\delta \varphi(q)}{\Phi(q)}
W^{(q)}(-x)\\
&- \frac{\delta \varphi(q)}{\Phi(q)} \int_{x}^{0} (\varphi(q)-\Phi(q))e^{-\varphi(q)(x-z)}W^{(q)}(-z)dz.
\end{split}
\end{equation}

(iii) For $x \geq 0$,
\begin{equation}
\begin{split}
K_q(dx)=e^{-\Phi(q)x}
\frac{q(\varphi(q)-\Phi(q))}{\varphi(q)\delta}dx,
\end{split}
\end{equation}
and for $x<0$,
\begin{equation}
\begin{split}
K_q(dx)
&=e^{-\Phi(q)x}
\frac{q(\varphi(q)-\Phi(q))}{\varphi(q)\delta}dx-\frac{q\Phi(q)}{\varphi(q)}f(x)dx, \end{split}
\end{equation}
where
\begin{equation}
\begin{split}
f(x)=
\mathbb W^{(q)}(-x)
&+(\Phi(q)-\varphi(q))\int_{x}^{0}e^{-\Phi(q)(x-z)}\mathbb W^{(q)}(-z)dz,  \ \ x<0.
\end{split}
\end{equation}
\end{Proposition}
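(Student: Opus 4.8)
I will prove the three parts separately; each one is an explicit Laplace-transform inversion that uses only standard facts about the running extrema of a spectrally negative L\'evy process. Write $A=\Phi(q)$ and $B=\varphi(q)$, so $B>A>0$. The ingredients are: (a) since $X$ has no positive jumps it creeps upward, so $\overline{X}_{e(q)}$ is exponential with rate $\Phi(q)$, whence $\mathbb E[e^{-s\overline{X}_{e(q)}}]=A/(A+s)$, and since $Y=X-\delta t$ is again spectrally negative with upper inverse $\varphi(q)$ we likewise have $\hat{\mathbb E}[e^{-s\overline{Y}_{e(q)}}]=B/(B+s)$; (b) the Wiener--Hopf factorisation $q/(q-\psi(s))=\mathbb E[e^{s\overline{X}_{e(q)}}]\,\mathbb E[e^{s\underline{X}_{e(q)}}]$, which with (a) gives $\mathbb E[e^{s\underline{X}_{e(q)}}]=q(A-s)/(A(q-\psi(s)))$ and, applied to $Y$ (Laplace exponent $\psi(s)-\delta s$), $\hat{\mathbb E}[e^{s\underline{Y}_{e(q)}}]=q(B-s)/(B(q-\psi(s)+\delta s))$; (c) the first-passage identity $\hat{\mathbb P}(-\underline{Y}_{e(q)}>u)=\hat{\mathbb E}_0[e^{-q\tau^-_{-u}}]=1+q\int_0^u\mathbb W^{(q)}(y)\,dy-\frac{q}{\varphi(q)}\mathbb W^{(q)}(u)$, which on differentiating yields $\hat{\mathbb P}(-\underline{Y}_{e(q)}\in du)=[\frac{q}{\varphi(q)}\mathbb W^{(q)\prime}(u)-q\mathbb W^{(q)}(u)]\,du$ on $(0,\infty)$ plus an atom of mass $\frac{q}{\varphi(q)}\mathbb W^{(q)}(0)$ at $0$ (present exactly when $X$ has bounded variation, by Lemma 2.1); and (d) the scale transform (2.4), rewritten as $\frac{1}{\psi(s)-q}=\int_{-\infty}^0 e^{sz}W^{(q)}(-z)\,dz$.

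Part (i): substitute (a) into the identity (2.12) characterising $F_1$; the right-hand side reduces to $\frac{B-A}{A}\cdot\frac{1}{B+s}$, whose inverse transform over $[0,\infty)$ is $\frac{\varphi(q)-\Phi(q)}{\Phi(q)}e^{-\varphi(q)x}$, which is (2.15). Part (ii): substitute (b) into the identity (2.14) characterising $F_2$, cancel the common factors using $\psi(\Phi(q))=q$ and $\psi(\varphi(q))-\delta\varphi(q)=q$ from (2.3), and split the outcome by partial fractions into $-\frac{\delta B}{A}\cdot\frac{1}{\psi(s)-q}$, $\frac{\delta B(B-A)}{A}\cdot\frac{1}{(B-s)(\psi(s)-q)}$, and $-\frac{B-A}{A}\cdot\frac{1}{B-s}$. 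By (d) the transform $\frac{1}{\psi(s)-q}$ inverts (over $(-\infty,0]$) to $W^{(q)}(-z)$ and $\frac{1}{B-s}$ to $-e^{-Bz}$, so the first and third pieces give the first two summands of (2.16); the middle piece is a product of two Laplace transforms, hence inverts to their convolution $-\int_z^0 e^{-B(z-\zeta)}W^{(q)}(-\zeta)\,d\zeta$, which is the third summand of (2.16). (These transform identities hold for $s>\varphi(q)$, where the integrals converge absolutely, and determine the continuous functions $F_1,F_2$ by uniqueness and analytic continuation.)

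Part (iii): $K_q$ defined in (2.11) is the distribution function of the independent sum $S:=\overline{X}_{e(q)}+\underline{Y}_{e(q)}$, and since $\overline{X}_{e(q)}$ has density $\Phi(q)e^{-\Phi(q)u}$ on $(0,\infty)$, differentiating (2.11) shows that $S$ has a density $K_q(dx)/dx$. For $x\ge0$ the convolution runs over all of $(-\infty,0]$, giving $\Phi(q)e^{-\Phi(q)x}\hat{\mathbb E}[e^{\Phi(q)\underline{Y}_{e(q)}}]$; evaluating the last expectation from (b) at $s=\Phi(q)$ using $\psi(\Phi(q))=q$ gives $e^{-\Phi(q)x}\frac{q(\varphi(q)-\Phi(q))}{\varphi(q)\delta}$, which is (2.17). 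For $x<0$ only the range $\{-\underline{Y}_{e(q)}\ge -x\}$ contributes; substituting the density from (c) and writing $v=-x>0$ leaves $\Phi(q)e^{\Phi(q)v}\int_v^\infty e^{-\Phi(q)u}[\frac{q}{\varphi(q)}\mathbb W^{(q)\prime}(u)-q\mathbb W^{(q)}(u)]\,du$. I would evaluate this by writing $\int_v^\infty=\int_0^\infty-\int_0^v$: the full integral equals $\hat{\mathbb E}[e^{\Phi(q)\underline{Y}_{e(q)}}]-\frac{q}{\varphi(q)}\mathbb W^{(q)}(0)$, while on the finite interval $[0,v]$ a single integration by parts turns $\int_0^v e^{-\Phi(q)u}\mathbb W^{(q)\prime}(u)\,du$ into boundary terms plus $\Phi(q)\int_0^v e^{-\Phi(q)u}\mathbb W^{(q)}(u)\,du$. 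Collecting the terms (the two $\mathbb W^{(q)}(0)$ contributions cancel), undoing $v=-x$, and using $e^{\Phi(q)v}\int_0^v e^{-\Phi(q)w}\mathbb W^{(q)}(w)\,dw=\int_x^0 e^{-\Phi(q)(x-z)}\mathbb W^{(q)}(-z)\,dz$ yields exactly (2.18)--(2.19).

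The step I expect to be the genuine obstacle is the bookkeeping in part (iii). Because $\int_0^\infty e^{-su}\mathbb W^{(q)}(u)\,du$ is finite only for $s>\varphi(q)$ (by (2.7)) and $\Phi(q)<\varphi(q)$, the integrals $\int_v^\infty e^{-\Phi(q)u}\mathbb W^{(q)}(u)\,du$ and $\int_v^\infty e^{-\Phi(q)u}\mathbb W^{(q)\prime}(u)\,du$ both diverge, so one must keep the combination $\frac{q}{\varphi(q)}\mathbb W^{(q)\prime}-q\mathbb W^{(q)}$ intact and split the \emph{range} of integration at $v$ rather than the integrand. Secondary points are the atom of $\underline{Y}_{e(q)}$ at $0$ in the bounded-variation case, handled uniformly through Lemma 2.1, and the domains of validity of the transforms in part (ii), handled by analytic continuation.
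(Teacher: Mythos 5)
Your proposal is correct and follows essentially the same route as the paper: parts (i) and (iii) use exactly the same Wiener--Hopf identities, the density/atom decomposition of $-\underline{Y}_{e(q)}$ via $\mathbb W^{(q)}$, and the same splitting of the convolution range at $v=-x$ followed by one integration by parts (your observation that the two $\mathbb W^{(q)}(0)$ terms cancel and that only the combined integrand is integrable is exactly the bookkeeping the paper's (2.28) performs). The only variation is in part (ii), where you invert (2.23) by a direct partial-fraction decomposition against the scale-function transform $1/(\psi(s)-q)=\int_{-\infty}^0 e^{sz}W^{(q)}(-z)\,dz$, whereas the paper first identifies the term $\frac{\delta\varphi(q)}{q}\mathbb E[e^{s\underline{X}_{e(q)}}]$ and then substitutes the potential measure (2.26) of $\underline{X}_{e(q)}$; the two computations are equivalent and of comparable length, and your partial fractions check out.
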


\begin{proof}
(i) It is known that (see, e.g., (8.2) in [3])
\begin{equation}
\mathbb E\left[e^{-s\overline{X}_{e(q)}}\right]=\frac{\Phi(q)}{\Phi(q)+s} \ \ and \ \ \mathbb E\left[e^{s\underline{X}_{e(q)}}\right]=\frac{q}{\Phi(q)}\frac{\Phi(q)-s}{q-\psi(s)},  \ \ s > 0.
\end{equation}
For the process $Y$, similar results hold, i.e.,
\begin{equation}
\hat{\mathbb E}\left[e^{-s\overline{Y}_{e(q)}}\right]=\frac{\varphi(q)}{\varphi(q)+s} \ \ and \ \ \hat{\mathbb E}\left[e^{s\underline{Y}_{e(q)}}\right]=\frac{q}{\varphi(q)}
\frac{\varphi(q)-s}{q-\psi(s)+\delta s},  \ \ s > 0.
\end{equation}

It follows from (2.12), (2.20) and (2.21) that
\begin{equation}
\int_{0}^{\infty}e^{-s x}F_1(x)dx=\frac{1}{s}\left(\frac{\varphi(q)(\Phi(q)+s)}{\Phi(q)(\varphi(q)+s)}-1\right)=
\frac{\varphi(q)-\Phi(q)}{\Phi(q)(\varphi(q)+s)},
\end{equation}
which leads to (2.15).

(ii) Formulas (2.14), (2.20) and (2.21) give
\begin{equation}
\int_{-\infty}^{0}e^{sx}F_2(x)dx=\frac{\Phi(q)-\varphi(q)}{\Phi(q)(\varphi(q)-s)}+
\frac{\delta \varphi(q)(\Phi(q)-s)}{\Phi(q)(q-\psi(s))(\varphi(q)-s)}, \ \ s > 0.
\end{equation}
For $s > \varphi(q)$, the right-hand side of (2.23) can be written as
\begin{equation}
\frac{\varphi(q)-\Phi(q)}{\Phi(q)}\int_{-\infty}^{0}e^{sx}e^{-\varphi(q)x}dx-
\frac{\delta \varphi(q)}{q}\mathbb E\left[e^{s \underline{X}_{e(q)}}\right]\int_{-\infty}^{0}e^{sx}e^{-\varphi(q)x}dx.
\end{equation}
From (2.23) and (2.24), for $x < 0$, we derive
\begin{equation}
F_2(x)=\frac{\varphi(q)-\Phi(q)}{\Phi(q)}e^{-\varphi(q)x}-\frac{\delta \varphi(q)}{q}
\int_{[x,0]}e^{-\varphi(q)(x-z)}\mathbb P\left(\underline{X}_{e(q)} \in dz\right).
\end{equation}
It is well-known that (see, e.g., formula (8.20) on page 219 of [3])
\begin{equation}
\mathbb P\left(-\underline{X}_{e(q)} \in dx\right)=\frac{q}{\Phi(q)}W^{(q)}(dx)-qW^{(q)}(x)dx, \ \ x \geq 0.
\end{equation}
From (2.25), (2.26) and the application of integration by parts, we derive (2.16).

(iii) For $x\geq 0$,  from (2.11), (2.20) and (2.21), it can be derived that
\begin{equation}
\begin{split}
K_q(dx)
&=\int_{(-\infty,0]}\mathbb P\left(\overline{X}_{e(q)} \in dx-z\right)\hat{\mathbb P}\left(\underline{Y}_{e(q)} \in dz \right)\\
&=\Phi(q)e^{-\Phi(q)x}\hat{\mathbb E}\left[e^{\Phi(q)\underline{Y}_{e(q)}}\right]dx=e^{-\Phi(q)x}
\frac{q(\varphi(q)-\Phi(q))}{\varphi(q)\delta}dx,
\end{split}
\end{equation}
where we have used the fact that $\psi(\Phi(q))=q$ in the final equality.

For $x<0$, applying (2.27) and integration by parts will lead to
\begin{equation}
\begin{split}
K_q(dx)
&=\int_{(-\infty,0]}\mathbb P\left(\overline{X}_{e(q)} \in  dx-z\right)\hat{\mathbb P}\left(\underline{Y}_{e(q)} \in dz \right)\\
&-\int_{(x,0]}\mathbb P\left(\overline{X}_{e(q)} \in dx-z\right)\hat{\mathbb P}\left(\underline{Y}_{e(q)} \in dz \right)\\
&=e^{-\Phi(q)x}
\frac{q(\varphi(q)-\Phi(q))}{\varphi(q)\delta}dx-\frac{q\Phi(q)}{\varphi(q)}\mathbb W^{(q)}(-x)dx\\
&-\frac{q\Phi(q)(\Phi(q)-\varphi(q))}{\varphi(q)}\int_{x}^{0}e^{-\Phi(q)(x-z)}\mathbb W^{(q)}(-z)dz,
\end{split}
\end{equation}
where in the second equality we have used a similar result to (2.26), that is,
\begin{equation}
\hat{\mathbb P}\left(-\underline{Y}_{e(q)} \in dx\right)=\frac{q}{\varphi(q)}\mathbb W^{(q)}(dx)-q \mathbb W^{(q)}(x)dx, \ \ x \geq 0.
\end{equation}
\end{proof}

Since the expression of $F_2(x)$ in (2.16) contains the function $W^{(q)}(x)$, $F_2^{\prime}(x)$ may not be well defined for a countable number of points (see Lemma 2.1). But, note that $K_q(dx)$ is absolutely continuous with respect to the Lebesgue  measure (see (2.17) and (2.18)). Thus we can rewrite (2.10) and (2.13) as follows:
\begin{equation}
\begin{split}
&\mathbb P_x\left(U_{e(q)} \in  dy\right)=q\int_{0}^{\infty}e^{-q t}\mathbb P_x\left(U_{t} \in  dy\right)dt=\\
&\left\{\begin{array}{cc}
(F_1(0)+1)K_q(dy-x)+\int_{b-x}^{y-x}F_1^{\prime}(y-x-z)K_q(dz)dy,&y \geq b,\\
(F_2(0)+1)K_q(dy-x)- \int_{y-x}^{b-x}F_2^{\prime}(y-x-z)K_q(dz)dy, & y < b,
\end{array}\right.
\end{split}
\end{equation}
where $F_1^{\prime}(x)=\frac{\varphi(q)(\Phi(q)-\varphi(q))}{\Phi(q)}e^{-\varphi(q)x}$ for all $x>0$; and for all $x < 0$, $F_2^{\prime}(x)$ is given in the following corollary. Therefore, proving Theorem 2.1 is equivalent to show that formula (2.30) will reduce to (2.8) and (2.9).

\begin{Corollary}
For $x < 0$, the derivative of $F_2(x)$ can be written informally as
\begin{equation}
\begin{split}
F_2^{\prime}(x)
&=\frac{\delta \varphi(q)}{\Phi(q)}\left(
W^{(q)\prime}(-x)-(\varphi(q)-\Phi(q)) \int_{-\infty}^{x} e^{-\varphi(q)(x-z)}W^{(q)\prime}(-z)dz\right).
\end{split}
\end{equation}
\end{Corollary}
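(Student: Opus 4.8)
The plan is to obtain (2.31) by differentiating the closed form (2.16) for $F_2$ term by term and then matching the result with the right-hand side of (2.31) via a single integration by parts. Since $W^{(q)}$ is differentiable only off a countable set (Lemma 2.1), throughout the argument $W^{(q)\prime}$ is read as the density of the measure $W^{(q)}(dx)$ and all manipulations are understood in the Lebesgue--Stieltjes sense; this is precisely the meaning of ``informally'' in the statement.

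First I would differentiate (2.16). The summand $\frac{\varphi(q)-\Phi(q)}{\Phi(q)}e^{-\varphi(q)x}$ contributes $-\frac{\varphi(q)(\varphi(q)-\Phi(q))}{\Phi(q)}e^{-\varphi(q)x}$; the summand $-\frac{\delta\varphi(q)}{\Phi(q)}W^{(q)}(-x)$ contributes $\frac{\delta\varphi(q)}{\Phi(q)}W^{(q)\prime}(-x)$ by the chain rule; and for the convolution summand I would write $\int_{x}^{0}e^{-\varphi(q)(x-z)}W^{(q)}(-z)\,dz=e^{-\varphi(q)x}\int_{x}^{0}e^{\varphi(q)z}W^{(q)}(-z)\,dz$ and apply Leibniz' rule, which yields $-\varphi(q)\int_{x}^{0}e^{-\varphi(q)(x-z)}W^{(q)}(-z)\,dz-W^{(q)}(-x)$. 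Collecting the pieces gives
\[
F_2^{\prime}(x)=-\frac{\varphi(q)(\varphi(q)-\Phi(q))}{\Phi(q)}e^{-\varphi(q)x}+\frac{\delta\varphi(q)}{\Phi(q)}W^{(q)\prime}(-x)+\frac{\delta\varphi(q)^{2}(\varphi(q)-\Phi(q))}{\Phi(q)}\int_{x}^{0}e^{-\varphi(q)(x-z)}W^{(q)}(-z)\,dz+\frac{\delta\varphi(q)(\varphi(q)-\Phi(q))}{\Phi(q)}W^{(q)}(-x).
\]

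Next I would rewrite the integral occurring in (2.31). Using that $z\mapsto W^{(q)}(-z)$ has Stieltjes differential $-W^{(q)\prime}(-z)\,dz$ and integrating by parts on $(-\infty,x]$,
\[
\int_{-\infty}^{x}e^{-\varphi(q)(x-z)}W^{(q)\prime}(-z)\,dz=-W^{(q)}(-x)+\varphi(q)\int_{-\infty}^{x}e^{-\varphi(q)(x-z)}W^{(q)}(-z)\,dz,
\]
where the boundary term at $-\infty$ vanishes because $W^{(q)}(u)=O(e^{\Phi(q)u})$ as $u\to\infty$ while $\varphi(q)>\Phi(q)$. Substituting this into the right-hand side of (2.31), comparing with the display for $F_2^{\prime}(x)$ above, cancelling the common $W^{(q)\prime}(-x)$ and $W^{(q)}(-x)$ terms, and dividing by $\frac{\varphi(q)(\varphi(q)-\Phi(q))}{\Phi(q)}$, the identity to be proved collapses to $\delta\varphi(q)\int_{-\infty}^{0}e^{\varphi(q)z}W^{(q)}(-z)\,dz=1$, i.e., after the substitution $u=-z$, to $\int_{0}^{\infty}e^{-\varphi(q)u}W^{(q)}(u)\,du=\frac{1}{\delta\varphi(q)}$. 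This is immediate from the Laplace transform (2.4), which is valid at $s=\varphi(q)$ since $\varphi(q)>\Phi(q)$, together with the defining relation $\psi(\varphi(q))-\delta\varphi(q)=q$ of $\varphi(q)$ in (2.3), which gives $\psi(\varphi(q))-q=\delta\varphi(q)$.

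The computation is elementary; the only points requiring care are the Lebesgue--Stieltjes integration by parts (so that $W^{(q)\prime}$ is never evaluated pointwise) and the verification that the $-\infty$ boundary term is zero, for which the strict inequality $\varphi(q)>\Phi(q)$ and the standard exponential growth rate of $W^{(q)}$ are exactly what is needed. I expect this bookkeeping, rather than any genuine analytic obstacle, to be the main thing to get right. One could instead compare the bilateral Laplace transforms of the two sides of (2.31) on $(-\infty,0]$, but that alternative must carry along the value $F_2(0)$, which depends through $W^{(q)}(0)$ (Lemma 2.1) on whether $X$ has bounded or unbounded variation, and is less transparent.
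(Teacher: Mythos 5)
Your proposal is correct and is essentially the paper's own argument: differentiate (2.16) term by term (giving exactly the paper's (2.32)), split $\int_x^0=\int_{-\infty}^0-\int_{-\infty}^x$, integrate by parts with the boundary term at $-\infty$ killed by $\varphi(q)>\Phi(q)$, and close with (2.4) at $s=\varphi(q)$ together with $\psi(\varphi(q))-\delta\varphi(q)=q$ (the paper's (2.33)). The only cosmetic difference is that you transform the target integral on the right-hand side of (2.31) and reduce to the identity $\int_0^\infty e^{-\varphi(q)u}W^{(q)}(u)\,du=1/(\delta\varphi(q))$, whereas the paper substitutes the same identity directly into (2.32); the ingredients and bookkeeping are identical.
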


\begin{proof}
It follows from (2.16) that
\begin{equation}
\begin{split}
F_2^{\prime}(x)
&=\frac{\delta \varphi(q)}{\Phi(q)}
W^{(q)\prime}(-x)+\frac{(\varphi(q)-\Phi(q))\varphi(q)}{\Phi(q)}\left(\delta W^{(q)}(-x)- e^{-\varphi(q)x}\right)\\
&+\frac{\delta (\varphi(q))^2}{\Phi(q)} \int_{x}^{0} (\varphi(q)-\Phi(q))e^{-\varphi(q)(x-z)}W^{(q)}(-z)dz.
\end{split}
\end{equation}
Applying integration by parts and formula (2.4)  gives
\begin{equation}
\begin{split}
&\int_{x}^{0}e^{-\varphi(q)(x-z)}W^{(q)}(-z)dz\\
&=\int_{-\infty}^{0}e^{-\varphi(q)(x-z)}W^{(q)}(-z)dz-
\int_{-\infty}^{x}e^{-\varphi(q)(x-z)}W^{(q)}(-z)dz\\
&=\frac{e^{-\varphi(q)x}}{\psi(\varphi(q))-q}-\frac{1}{\varphi(q)}\left(W^{(q)}(-x)+
\int_{-\infty}^{x}e^{-\varphi(q)(x-z)}W^{(q)\prime}(-z)dz\right),
\end{split}
\end{equation}
where in the final equality we have used $\lim_{x \uparrow \infty} e^{-\varphi(q)x}W^{(q)}(x)=0$, which is due to (2.4) and the fact that $\varphi(q)> \Phi(q)$ (see (2.3)). Then (2.31) is derived from (2.32) and (2.33).
\end{proof}

\section{The case of $y \geq b$}
In this section, we assume that $y\geq b$ is given and our objective is to prove that formula (2.30) with $y \geq b$ will reduce to (2.8).

It follows from (2.15) that $F_1(0)+1=\frac{\varphi(q)}{\Phi(q)}$. Therefore, for $y \geq b$, the right-hand side of (2.30) can be written as
\begin{equation}
\frac{\varphi(q)}{\Phi(q)}K_q(dy -x )+
\int_{b-x}^{y-x}F_1^{\prime}(y-x-z)K_q(dz)dy.
\end{equation}

For given $y \geq b$ and all $x \in \mathbb R$, we have
\begin{equation}
\begin{split}
&\int_{b-x}^{y-x}(\varphi(q)-\Phi(q))e^{-\varphi(q)(y-x-z)}e^{-\Phi(q)z}dz
=e^{\Phi(q)(x-y)}-e^{\varphi(q)(b-y)}
e^{\Phi(q)(x-b)}.
\end{split}
\end{equation}

(i) For $x < b \leq y$, formulas (2.15), (2.17) and  (3.2) produce
\begin{equation}
\begin{split}
&\frac{\varphi(q)}{\Phi(q)} K_q(dy -x )+
\int_{b-x}^{y-x}F_1^{\prime}(y-x-z)K_q(dz)dy\\
&=\frac{(\varphi(q)-\Phi(q)) q}{\delta \Phi(q)}e^{\Phi(q)(x-b)}e^{\varphi(q)(b-y)}dy.
\end{split}
\end{equation}

(ii) For $y \geq  x \geq b$, it holds that
\begin{equation}
\begin{split}
\int_{b-x}^{y-x}F_1^{\prime}(y-x-z)
&K_q(dz)
=\int_{b-x}^{0}q(\varphi(q)-\Phi(q))e^{-\varphi(q)(y-x-z)}f(z)dz\\
&\ \ -\int_{b-x}^{y-x}\frac{q(\Phi(q)-\varphi(q))^2}
{\delta \Phi(q)}e^{-\varphi(q)(y-x-z)}e^{-\Phi(q)z}dz,
\end{split}
\end{equation}
where $f(x)$ is given by (2.19). For $s > \varphi(q)>\Phi(q)$, we can derive
\begin{equation}
\begin{split}
&\int_{-\infty}^{0}e^{st}\int_{t}^{0}e^{-\varphi(q)(t-z)}f(z)dzdt=\frac{1}{s-\varphi(q)}
\int_{-\infty}^{0}e^{sz}f(z)dz\\
&=\frac{1}{s-\varphi(q)}
\left(
\frac{1}{\psi(s)-\delta s -q}+\frac{1}{s-\Phi(q)}\frac{(\Phi(q)-\varphi(q))}{\psi(s)-\delta s -q}\right)\\
&=\frac{1}{s-\Phi(q)}\frac{1}{\psi(s)-\delta s -q}=\int_{-\infty}^{0}e^{sz}e^{-\Phi(q)z}dz\int_{-\infty}^{0}e^{sz}\mathbb W^{(q)}(-z)dz,
\end{split}
\end{equation}
which means that
\begin{equation}
\begin{split}
&\int_{x}^{0}e^{-\varphi(q)(x-z)}f(z)dz=\int_{x}^{0}e^{-\Phi(q)(x-z)}\mathbb W^{(q)}(-z)dz, \ \ x \leq 0.
\end{split}
\end{equation}

Therefore, for $b\leq x \leq y$,  from (2.17), (3.2), (3.4) and (3.6),  we obtain
\begin{small}
\begin{equation}
\begin{split}
&\frac{\varphi(q)}{\Phi(q)} K_q(dy -x )+
\int_{b-x}^{y-x}F_1^{\prime}(y-x-z)K_q(dz)dy\\
&=\frac{q(\varphi(q)-\Phi(q))}{\delta \Phi(q)}e^{\varphi(q)(b-y)}
e^{\Phi(q)(x-b)}\left(1+\delta \Phi(q)
\int_{b-x}^{0}e^{\Phi(q)z}\mathbb W^{(q)}(-z)dz\right)dy.
\end{split}
\end{equation}
\end{small}

(iii) For $x> y \geq b$, we know from (2.15) and (2.18) that
\begin{equation}
\begin{split}
\int_{b-x}^{y-x}F_1^{\prime}
(y-x
&-z)
K_q(dz)
=\int_{b-x}^{y-x}q(\varphi(q)-\Phi(q))e^{-\varphi(q)(y-x-z)}\\
&\times \left(f(z)
+\frac{(\Phi(q)-\varphi(q))}
{\delta \Phi(q)}e^{-\Phi(q)z}\right)dz.
\end{split}
\end{equation}
Since
\begin{equation}
\begin{split}
\int_{b-x}^{y-x}e^{-\varphi(q)(y-x-z)}f(z)dz
&=e^{\varphi(q)(b-y)}\int_{b-x}^{0}e^{-\varphi(q)(b-x-z)}f(z)dz\\
&-\int_{y-x}^{0}e^{-\varphi(q)(y-x-z)}f(z)dz.
\end{split}
\end{equation}
Then, from (2.18), (3.2), (3.6), (3.8) and (3.9), some simple calculations yield
\begin{small}
\begin{equation}
\begin{split}
&\frac{\varphi(q)}{\Phi(q)} K_q(dy -x )+
\int_{b-x}^{y-x}F_1^{\prime}(y-x-z)K_q(dz)dy=-q\mathbb W^{(q)}(x-y)dy\\
&+\frac{q(\varphi(q)-\Phi(q))}{\delta \Phi(q)}e^{\varphi(q)(b-y)}
e^{\Phi(q)(x-b)}\left(1+\delta \Phi(q)
\int_{b-x}^{0}e^{\Phi(q)z}\mathbb W^{(q)}(-z)dz\right)dy.
\end{split}
\end{equation}
\end{small}

From (2.8), (3.1), (3.3), (3.7) and (3.10), we obtain  the desired result.

\section{The case of $y<b$}
In this section, we  want to show that formula (2.30) with $y<b$ will reduce to (2.9).

Since $F_2(0)+1=\frac{\varphi(q)}{\Phi(q)}(1-\delta W^{(q)}(0))$ (see (2.16)), for given $y<b$ and $x \in \mathbb R$, the right-hand side of (2.30) is equal to
\begin{equation}
\frac{\varphi(q)}{\Phi(q)}(1-\delta W^{(q)}(0)) K_q(dy-x)-\int_{y-x}^{b-x}F_2^{\prime}(y-x-z)K_q(dz)dy.
\end{equation}

For given $y < b$, exchanging the order of integration gives us
\begin{small}
\begin{equation}
\begin{split}
&\int_{y-b}^{0}e^{\Phi(q)t}\int_{-\infty}^{t}e^{-\varphi(q)(t-z)}W^{(q)\prime}(-z)dzdt\\
&=\frac{1}{\Phi(q)-\varphi(q)}\left(\int_{-\infty}^{0}e^{\varphi(q)z}W^{(q)\prime}(-z)dz
-\int_{y-b}^{0}e^{\Phi(q)z}W^{(q)\prime}(-z)dz\right)\\
&-\frac{e^{\Phi(q)(y-b)}}{\Phi(q)-\varphi(q)}
\int_{-\infty}^{y-b}e^{-\varphi(q)(y-b-z)}W^{(q)\prime}(-z)dz.
\end{split}
\end{equation}
\end{small}
In addition, formula (2.6) leads to
\begin{equation}
\begin{split}
\int_{-\infty}^{0}e^{\varphi(q)z}W^{(q)\prime}(-z)dz=
\frac{1}{\delta}-W^{(q)}(0),
\end{split}
\end{equation}
which is due to the fact of $\psi(\varphi(q))-\delta \varphi(q)=q$ (see the definition of $\varphi(q)$ in (2.3)).

For given $y<b$ and all $x \in \mathbb R$, from (2.17), (2.31), (4.2) and (4.3), we obtain
\begin{equation}
\begin{split}
&\int_{y-x}^{b-x}F_2^{\prime}(y-x-z)e^{-\Phi(q)z}dz=\int_{y-b}^{0}F_2^{\prime}(t)e^{-\Phi(q)(y-x-t)}dt\\
&=
\frac{\delta \varphi(q)}{\Phi(q)}
\left(\frac{1}{\delta}-W^{(q)}(0)\right)e^{\Phi(q)(x-y)}\\
&-\frac{\delta \varphi(q)}{\Phi(q)} e^{\Phi(q)(x-b)}\int_{-\infty}^{y-b}e^{-\varphi(q)(y-b-z)}W^{(q)\prime}(-z)dz.
\end{split}
\end{equation}

(i) Assume that $x \leq y < b$.  Formulas (2.17), (2.31) and (4.4) lead to
\begin{equation}
\begin{split}
&\frac{\varphi(q)}{\Phi(q)}(1-\delta W^{(q)}(0)) K_q(dy-x)-\int_{y-x}^{b-x}F_2^{\prime}(y-x-z)K_q(dz)dy\\
&=\frac{q(\varphi(q)-\Phi(q))}{\Phi(q)} e^{\Phi(q)(x-b)}\int_{-\infty}^{y-b}e^{-\varphi(q)(y-b-z)}W^{(q)\prime}(-z)dzdy.
\end{split}
\end{equation}

(ii) For $y<x \leq b$, it follows from (2.17) and (2.18) that
\begin{equation}
\begin{split}
\int_{y-x}^{b-x}F_2^{\prime}(y-x-z)
&K_q(dz)=-\frac{q\Phi(q)}{\varphi(q)}
\int_{y-x}^{0}F_2^{\prime}(y-x-z)f(z)dz\\
&+\int_{y-x}^{b-x}F_2^{\prime}(y-x-z)\frac{q(\varphi(q)-
\Phi(q))}{\delta \varphi(q)}e^{-\Phi(q)z}dz.
\end{split}
\end{equation}

For $s>0$, formula (2.14) gives
\[
F_2(0)+1-\int_{-\infty}^{0}e^{s x}F_2^{\prime}(x)dx=\frac{\mathbb E\left[e^{s \underline{X}_{e(q)}}\right]}{\hat{\mathbb E}\left[e^{s \underline{Y}_{e(q)}}\right]},
\]
which combined with (2.16), (2.19), (2.20) and (2.21), leads to
\begin{equation}
\begin{split}
&\int_{-\infty}^{0}e^{sz}\int_{z}^{0}F_2^{\prime}(z-t)f(t)dtdz=
\int_{-\infty}^{0}e^{sz}F_2^{\prime}(z)dz \int_{-\infty}^{0}e^{sz}f(z)dz\\
&=\left(\frac{\varphi(q)}{\Phi(q)}\left(1-\delta W^{(q)}(0)\right)-\frac{\mathbb E\left[e^{s \underline{X}_{e(q)}}\right]}{\hat{\mathbb E}\left[e^{s \underline{Y}_{e(q)}}\right]}\right)
\times\frac{s-\varphi(q)}{(s-\Phi(q))(\psi(s)-\delta s -q)}\\
&=\frac{\varphi(q)}{\Phi(q)}\left(1-\delta W^{(q)}(0)\right)\int_{-\infty}^{0}e^{sz}f(z)dz-\frac{\varphi(q)}{\Phi(q)}\frac{1}{\psi(s)-q}, \ \ s> \varphi(q).
\end{split}
\end{equation}
Recall formula (2.4) and note that $\varphi(q) > \Phi(q)$. Formula (4.7) yields
 \begin{equation}
\begin{split}
\int_{y-x}^{0}F_2^{\prime}(y-x-z)f(z)dz
&=\frac{\varphi(q)}{\Phi(q)}\left(1-\delta W^{(q)}(0)\right)f(y-x)\\
&-\frac{\varphi(q)}{\Phi(q)}W^{(q)}(x-y), \ \ x>y.
\end{split}
\end{equation}

From (2.18), (4.4), (4.6) and (4.8), we deduce
\begin{equation}
\begin{split}
&\frac{\varphi(q)}{\Phi(q)}(1-\delta W^{(q)}(0)) K_q(dy-x)-\int_{y-x}^{b-x}F_2^{\prime}(y-x-z)K_q(dz)dy\\
&=\frac{q(\varphi(q)-\Phi(q))}{\Phi(q)} e^{\Phi(q)(x-b)}\int_{-\infty}^{y-b}e^{-\varphi(q)(y-b-z)}W^{(q)\prime}(-z)dzdy\\
&-qW^{(q)}(x-y)dy.
\end{split}
\end{equation}

(iii) For $x>b>y$, from (2.18), it is obvious that
\begin{equation}
\begin{split}
&\int_{y-x}^{b-x}F_2^{\prime}(y-x-z)K_q(dz)\\
&=\int_{y-x}^{b-x}F_2^{\prime}(y-x-z)\left(\frac{q(\varphi(q)-
\Phi(q))}{\delta \varphi(q)}e^{-\Phi(q)z}-\frac{q\Phi(q)}{\varphi(q)}f(z)\right)dz\\
&=\int_{y-x}^{b-x}F_2^{\prime}(y-x-z)\frac{q(\varphi(q)-
\Phi(q))}{\delta \varphi(q)}e^{-\Phi(q)z}dz\\
&-\frac{q\Phi(q)}{\varphi(q)}
\int_{y-x}^{0}F_2^{\prime}(t)f(y-x-t)dt+\frac{q\Phi(q)}{\varphi(q)}
\int_{y-x}^{y-b}F_2^{\prime}(t)f(y-x-t)dt.
\end{split}
\end{equation}

We can exchange the order of integration
\[
\begin{split}
&\int_{y-x}^{y-b}\int_{-\infty}^{t}e^{-\varphi(q)(t-z)}W^{(q)\prime}(-z)dz\int_{y-x-t}^{0}
e^{-\Phi(q)(y-x-t-z)}\mathbb W^{(q)}(-z)dzdt\\
&=\int_{y-x}^{y-b}\int_{-\infty}^{t}e^{-\varphi(q)(t-z_1)}W^{(q)\prime}(-z_1)dz_1\int_{y-x-t}^{0}
e^{-\Phi(q)(y-x-t-z_2)}\mathbb W^{(q)}(-z_2)dz_2dt\\
&=\int_{b-x}^{0}\mathbb W^{(q)}(-z_2)dz_2\int_{-\infty}^{y-x-z_2}W^{(q)\prime}(-z_1)dz_1
\int_{y-x-z_2}^{y-b}e^{-\varphi(q)(t-z_1)}e^{-\Phi(q)(y-x-t-z_2)}dt\\
&+\int_{b-x}^{0}\mathbb W^{(q)}(-z_2)dz_2\int_{y-x-z_2}^{y-b}W^{(q)\prime}(-z_1)dz_1
\int_{z_1}^{y-b}e^{-\varphi(q)(t-z_1)}e^{-\Phi(q)(y-x-t-z_2)}dt,
\end{split}
\]
and then derive that
\[
\begin{split}
&\int_{y-x}^{y-b}\int_{-\infty}^{t}e^{-\varphi(q)(t-z)}W^{(q)\prime}(-z)dz\int_{y-x-t}^{0}
e^{-\Phi(q)(y-x-t-z)}\mathbb W^{(q)}(-z)dzdt\\
&=\int_{b-x}^{0}\frac{\mathbb W^{(q)}(-z_2)}{\Phi(q)-\varphi(q)} e^{-\Phi(q)(b-x-z_2)}dz_2\int_{-\infty}^{y-b}e^{-\varphi(q)(y-b-z_1)}W^{(q)\prime}(-z_1)dz_1
\\
&-\int_{b-x}^{0}\frac{\mathbb W^{(q)}(-z_2)}{\Phi(q)-\varphi(q)} e^{\varphi(q)z_2}dz_2\int_{-\infty}^{y-x-z_2}e^{-\varphi(q)(y-x-z_1)}W^{(q)\prime}(-z_1)dz_1
\\
&-\int_{b-x}^{0}\frac{\mathbb W^{(q)}(-z_2)}{\Phi(q)-\varphi(q)} e^{\Phi(q)z_2}dz_2\int_{y-x-z_2}^{y-b}e^{-\Phi(q)(y-x-z_1)}W^{(q)\prime}(-z_1)dz_1\\ &=\int_{b-x}^{0}\frac{\mathbb W^{(q)}(-z_2)}{\Phi(q)-\varphi(q)}
e^{-\Phi(q)(b-x-z_2)}dz_2\int_{-\infty}^{y-b}e^{-\varphi(q)(y-b-z_1)}W^{(q)\prime}(-z_1)dz_1
\\
&-\int_{y-x}^{y-b}\frac{\mathbb W^{(q)}(x+t-y)}{\Phi(q)-\varphi(q)} \int_{-\infty}^{t}e^{-\varphi(q)(t-z_1)}W^{(q)\prime}(-z_1)dz_1dt
\\
&-\int_{y-x}^{y-b}\int_{y-x-z_1}^{0}\frac{\mathbb W^{(q)}(-z_2)}{\Phi(q)-\varphi(q)}
e^{\Phi(q)z_2}dz_2e^{-\Phi(q)(y-x-z_1)}W^{(q)\prime}(-z_1)dz_1,
\end{split}
\]
which combined with (2.19) and (2.31), yields (after some straightforward computations)
\begin{equation}
\begin{split}
&\int_{y-x}^{y-b}F_2^{\prime}(t)f(y-x-t)dt=\frac{\delta \varphi(q)}{\Phi(q)}\int_{y-x}^{y-b}W^{(q)\prime}(-t)\mathbb W^{(q)}(x+t-y)dt\\
&+\frac{\delta \varphi(q)(\Phi(q)-\varphi(q))}{\Phi(q)} \int_{b-x}^{0}\mathbb W^{(q)}(-z_2) e^{-\Phi(q)(b-x-z_2)}dz_2\\
&\times\int_{-\infty}^{y-b}e^{-\varphi(q)(y-b-z_1)}W^{(q)\prime}(-z_1)dz_1.
\end{split}
\end{equation}

Then, from (2.18), (4.4), (4.8), (4.10) and (4.11),  we arrive at
\begin{equation}
\begin{split}
&\frac{\varphi(q)}{\Phi(q)}(1-\delta W^{(q)}(0)) K_q(dy-x)-\int_{y-x}^{b-x}F_2^{\prime}(y-x-z)K_q(dz)dy\\
&=\frac{q(\varphi(q)-\Phi(q))}{\Phi(q)} e^{\Phi(q)(x-b)}\int_{-\infty}^{y-b}e^{-\varphi(q)(y-b-z)}W^{(q)\prime}(-z)dzdy\\
&-q\left(W^{(q)}(x-y)+\delta\int_{y-x}^{y-b}W^{(q)\prime}(-t)\mathbb W^{(q)}(x+t-y)dt\right)dy\\
&+q \delta (\varphi(q)-\Phi(q))\int_{b-x}^{0}\mathbb W^{(q)}(-z) e^{-\Phi(q)(b-x-z)}dz\\
&\times\int_{-\infty}^{y-b}e^{-\varphi(q)(y-b-z)}W^{(q)\prime}(-z)dzdy.
\end{split}
\end{equation}

Therefore, the conclusion that formula (2.30) for $y<b$ will reduce to (2.9) follows from (2.9), (4.1), (4.5), (4.9) and (4.12).

\section{Conclusion}
In this paper, we confirm that Corollary 4.1 in [9] also holds for a refracted L\'evy process driven by a L\'evy process without positive jumps. This strengthens the correctness of our previous conjecture, i.e., Corollary 4.1 in [9] holds for a general L\'evy process and the associated refracted L\'evy process.
To show this conjecture, it is necessary to answer first the following question: Is there a solution $U$ to (1.1) for a general process $X$ (which has not been solved to our knowledge and is difficult to be solved). Proving the above conjecture is an interesting research problem and is one of our future research directions.

\bigskip
%\textbf{References}

\end{document}